\documentclass[11pt]{amsart}
\usepackage{amssymb, amsfonts, latexsym, amsthm, amsmath, verbatim, enumerate}
\usepackage{soul}
\usepackage{paralist}

\newtheorem{theorem}{Theorem}
\theoremstyle{plain}

\newtheorem{case}{Case}

\newtheorem{conjecture}{Conjecture}
\newtheorem{corollary}{Corollary}

\newtheorem{definition}{Definition}

\newtheorem{lemma}{Lemma}

\newtheorem{proposition}{Proposition}

\newtheorem*{question}{Question}

\newcommand{\RR}{\mathbb{R}}

\newcommand{\Q}{\mathbb{Q}}
\newcommand{\ZZ}{\mathbb{Z}}

\newcommand{\bpm}{\begin{pmatrix}}
\newcommand{\epm}{\end{pmatrix}}

\newcommand{\vv}{\mathbf{v}}

\usepackage{fancyhdr}
\usepackage[left=1in,top=1in,right=1 in,bottom=1in]{geometry}

\pagestyle{fancy}

\lhead{}
\rhead{}

\linespread{1.1}

\begin{document}

\title{ Cubic Irrationals and Periodicity via a Family of Multi-dimensional Continued Fraction Algorithms}
\author{Krishna Dasaratha \and Laure Flapan\and Thomas Garrity \and Chansoo Lee \and 
Cornelia Mihaila\and Nicholas Neumann-Chun
 \and Sarah Peluse\and Matthew Stoffregen}

\keywords{Multidimensional Continued Fractions, Hermite Problem, Cubic Number Fields}
\subjclass[2000]{Primary 11J70; Secondary 11A55}
\thanks{The authors thank the National Science Foundation for their support of this research via grant DMS-0850577}
\thanks{K. Dasaratha- Stanford University} \thanks{L. Flapan- University of California, Los Angeles} \thanks{T. Garrity- Williams College, tgarrity@williams.edu} \thanks{C. Lee- University of Michigan at Ann Arbor} \thanks{C. Mihaila- University of Texas at Austin} \thanks{N. Neumann-Chun- Williams College}\thanks{S. Peluse- The University of Chicago}\thanks{M. Stoffregen- University of California, Los Angeles}

\maketitle

\begin{abstract}
We  construct a  countable family of multi-dimensional continued fraction algorithms, built out of five specific multidimensional continued fractions, and  find a wide class of cubic irrational real numbers $\alpha$ so that either $(\alpha, \alpha^2)$ or $(\alpha, \alpha-\alpha^2)$ is purely periodic with respect to an element in the family.  These cubic irrationals seem to be quite natural, as we show that, for every cubic number field, there exists a pair $(u, u')$ with $u$  a unit in the cubic number field (or possibly the quadratic extension of the cubic number field by the square root of the discriminant)  such that $(u, u')$ has a periodic multidimensional continued fraction expansion under one of the maps in the family  generated by the initial five maps. 
These results are built on a careful technical analysis of certain units in cubic number fields and our family of multi-dimensional continued fractions.  
We then recast the linking of  cubic irrationals with periodicity to the linking of cubic irrationals with the  construction of a matrix with nonnegative integer entries for which at least one row is eventually periodic.

\end{abstract}

\section{Introduction}

A real number has an eventually periodic decimal expansion precisely when it is rational.  A real number has an eventually periodic continued fraction expansion precisely when it is a quadratic irrational.  A natural question (which Hermite \cite{HermiteC} asked Jacobi in 1839) is if there is a way to represent real numbers as sequences of nonnegative integers such that a number's algebraic properties are revealed by the periodicity of its sequence. Specifically, Hermite wanted an algorithm that returns an eventually periodic sequence of integers if and only if its input is a cubic irrational.

There have been many attempts to produce such an algorithm.  These attempts fall into two classes \cite{BuchmannJ85}.  The first is a class of algorithms based on number geometric interpretations of the standard continued fractions algorithm, which goes back at least to the work of  Minkowksi.  More current examples  of such work can be found in \cite{DuboisE80, Delone64, BuchmannJ89}. The second class of algorithms tries to generalize the standard continued fractions algorithm arithmetically.  So far, no periodic arithmetic algorithm has been found.  Algorithms in this class are known as ``multidimensional continued fractions."  Descriptions of the most well-known of the Multidimensional Continued Fraction algorithms can be found in \cite{SchweigerF00, SchweigerF95, BrentjesAJ81} and in a recent survey 
\cite{Berthe11}
.

We suspect that there is no such algorithm.  Instead,  the best possible solution to the Hermite problem will be in the form of a family of algorithms,  meaning that for any arbitrary $\alpha$, we can produce a $\beta$ such that the sequence of integers associated with  $(\alpha,\beta)$ will be periodic with respect to some algorithm in the family if and only if $\alpha$ is a cubic irrational. Such a family is capable of being encoded as a matrix $(a_{ij})$, with $1\leq i,j <\infty$.  Each row will be the sequence of integers associated to some algorithm in the family.   In this language, we would want one of the rows to be eventually periodic if and only if $\alpha$ is a cubic irrational.  
This suggests that to one should not construct a single multidimensional continued fraction algorithm but instead construct a family of such algorithms.  In this paper we propose such a family of algorithms (built out of Trip maps \cite{SMALL-TRIP}) and show show that for a wide class of pairs of  cubic irrationals in the same cubic number field that there will be an element from this family  for which the pair will have periodic expansion.

Heuristically, the reason, in part,  that a real number $\alpha$ is a quadratic irrational is that the vector $(1,\alpha)$ is an eigenvector of a $2\times 2$ invertible matrix with rational entries that is not a multiple of the identity.  More precisely, there needs to be a matrix $A\in GL(2, \Q)$ such that $(1,\alpha)A$ is an eigenvector of a $2\times 2$ invertible matrix with rational entries that is not a multiple of the identity.  The continued fraction algorithm can be interpreted as  a procedure for producing a sequence of matrices in $SL(2, \ZZ)$ (and hence in $ GL(2, \Q)$) so that we are guaranteed that if $\alpha$ is a quadratic irrational, then  there is a matrix $A$ in our sequence with the  $(1,\alpha)A$ our desired type of eigenvector.  Of course, continued fractions have many other key properties, especially about questions involving Diophantine approximations.

In a similar fashion, numbers $\alpha$ and $\beta$ will be in the same cubic number field if there is matrix $A\in GL(3, \Q)$ such that $(1,\alpha, \beta)A$ is an eigenvector of a $3\times 3$ invertible matrix with rational entries that is not a multiple of the identity.   All multidimensional continued fraction algorithms produce sequences of matrices in $GL(3, \Q)$ (often in $SL(3, \ZZ)$).  Periodicity of the given algorithm corresponds to finding our desired matrix $A$.  It appears that each of the existing algorithms are, in some sense, one-dimensional, and for dimension reasons, not guaranteed to find $A$.  This supports the observation that for almost all  existing multidimensional continued fraction algorithms, eventual periodicity means that $\alpha$ and $\beta$ are in the  same cubic number field, but that for none of these algorithms can the converse be shown.  This also suggests why we turn to a five dimensional family of multidimensional continued fraction algorithms.

In section \ref{earlier} we discuss some earlier work and in particular the work of Dubois and Paysant-Le Roux  \cite{Dubois-Paysant-Le Roux75}.  In section \ref{pdsection} we review  the family of $216$ Trip maps, from \cite{SMALL-TRIP}. In section \ref{mainsection} we present the main technical results of our paper.  We construct a family of multidimensional continued fraction algorithms formed by compositions of five  of the Trip maps  and show that  there is  a wide class of cubic irrational real numbers $\alpha$ so that either $(\alpha, \alpha^2)$ or $(\alpha, \alpha-\alpha^2)$ is purely periodic with respect to an element in the family.  These types of cubic irrationals are quite natural, as they are linked to finding units in cubic number fields.  In  particular, we show that, for every cubic number field, there exists a pair $(u, u')$ with $u$  a unit in the cubic number field (or possibly the quadratic extension of the cubic number field by the square root of the discriminant)  such that $(u, u')$ has a periodic multidimensional continued fraction expansion under one of the maps in this family of $5$ maps.  This is  the technical heart of the paper, linking periodicity and units.  A similar link was done earlier by Dubois and Paysant-Le Roux but the roots of cubic polynomials that we find seem  simpler to express than the earlier work of Dubois and Payant-Le Roux  \cite{Dubois-Paysant-Le Roux75} (no doubt due to the fact that Dubois and Paysant-Le Roux are only working with the Jacobi-Perron algorithm).   We are also expressing far more cubic irrationals as periodic sequences than those in Dubois and Paysant-Le Roux. In section \ref{hermitesection}, we put the idea of finding a family of multidimensional continued fractions into the rhetoric of Hermite matrices, allowing us to explicitly express the view that a multidimensional continued fraction family will solve the Hermite problem should mean that a pair $(\alpha, \beta)$ are cubic irrationals in the same cubic number field if and only if at least one row of the corresponding Hermite matrix is eventually periodic.


We would like to thank Oleg Karpenkov for pointing out a significant misstatement of our main result in an earlier version of this paper.  Finally we would like to thank the referee for pointing out to us the earlier work of Dubois and Payant-Le Roux  \cite{Dubois-Paysant-Le Roux75}.

\section{Earlier Work}\label{earlier}

As mentioned above, for almost every existing multidimensional continued fraction algorithm we know that periodicity implies cubic irrationality, at worse.  For each of these algorithms then, we can characterize some of the cubic polynomials whose roots are described periodically.  For example, in the first paper on triangle sequences  \cite{GarrityT01}, it was shown that a pair $(\alpha, \alpha^2)$ will have purely periodic extension of length one with triangle sequence $(n,n,n, \ldots)$ (where $0<\alpha<1$) if and only if $\alpha$ is a root of 
$$x^3+nx^2+x-1=0.$$

Some of the most interesting results along these lines is in the work of Dubois and Paysant-Le Roux in {\it Algorithme de Jacobi-Perron dans les extensions cubiques} \cite{Dubois-Paysant-Le Roux75}  from almost forty years ago. Their work involves the Jacobi-Perron algorithm.   In particular, they showed that for every real cubic number field there is a pair of numbers with periodic Jacobi-Perron expansion.  Specifically, in their lemma 2, they show that if $\alpha$ is a positive root of the cubic 
$$x^3 - ax^2 + bx -1,$$
where $a$ and $b$ are positive integers satisfying $b>3$ and $a>2(b-1)$, then the pair
$$\left( \frac{(a-1)(\alpha-1)}{(a-3)\alpha - (b-3)},  \frac{(\alpha - 1)^2}{ (a-3)\alpha - (b-3)} \right)  $$
will be purely periodic with respect to the Jacobi-Perron algorithm.  In theorem 1, they show that every real cubic number field has an element $\alpha$ that is a root of a polynomial $x^3 - ax^2 + bx -1,$
where $a$ and $b$ are positive integers satisfying $b>3$ and $a>2(b-1)$

As shown in \cite{SMALL-TRIP}, this means that these pairs are periodic with respect to a  Combo Trip  map involving $T_{(e,e,e)} $  and $T_{e, (123), e)}$.  (We will define Combo Trip maps and this notation in the next section).  The main goal of this paper is to drastically expand the numbers for which we are guaranteed to have periodic Combo Trip  map expansions.  The pairs will look simpler than the pair of Dubois and Paysant-Le Roux, though this is in part due to the fact that Dubois and Paysant-Le Roux restrict themselves to the Jacobi-Perron algorithm .  We will also show that our found pairs will capture all cubic number fields.


\section{The Trip Algorithms}\label{pdsection}

This section closely follows sections 2 and 3 from \cite{SMALL-TRIP}.

We begin by describing the original triangle map, which is a multidimensional continued fraction algorithm, as defined in \cite{GarrityT01} and further developed in \cite{GarrityT05,  Schweiger05} .  (The triangle map was shown to be erogodic in \cite{SchweigerF08}.)  We then introduce permutations into the definition of the triangle map, thereby generating a family of $216$ multidimensional continued fractions.  Finally, we show how to produce triangle sequences, which are the analog of continued fraction expansions.

\subsection{The Triangle Map}
\label{trianglemap}

The ordinary continued fraction expansion is computed by iterating the Gauss map on the unit interval.  The triangle map generalizes this method.  Instead of the unit interval, we use a 2-simplex, i.e. a triangle.  We think of this triangle as lying in $\RR^3$.  Specifically, define
\[\triangle = \{(1,x,y) : 1 \geq x \geq y > 0\}\]

Define $\pi:\RR^3 -\{(0,x,y\} \rightarrow \RR^2$ by setting 
$$\pi(z,x,y)= \left(\frac{x}{z},\frac{y}{z}\right).$$
The vectors
\[
\vv_1 = \left(\begin{array}{c}1 \\0 \\0\end{array}\right), 
\vv_2 = \left(\begin{array}{c}1 \\1 \\0\end{array}\right),
\vv_3 = \left(\begin{array}{c}1 \\1 \\1\end{array}\right).\]
are the vertices of $\triangle$.  Now in order to partition $\triangle$, we consider the following two matrices:
\[A_0 = \left(\begin{array}{ccc}0 & 0 & 1 \\1 & 0 & 0 \\0 & 1 & 1\end{array}\right),
A_1 = \left(\begin{array}{ccc}1 & 0 & 1 \\0 & 1 & 0 \\0 & 0 & 1\end{array}\right)\]
Let $B = (\vv_1 \  \vv_2  \ \vv_3)$.  Then the column vectors of $BA_0$ and $BA_1$ describe a disjoint partition of $\triangle$.  We iterate this division process, and define $\triangle_k$ to be the image of $\triangle$ under $A_1^k A_0$.  Then let $T^{(k)}$ be a bijective map from $\pi(\triangle_k)$ to $\pi(\triangle)$ given by
\begin{eqnarray*}
T^{(k)}(x, y)& =& \pi\left[ (1, x, y) \cdot (B A_0^{-1} A_1^{-k} B^{-1})^T \right]\\
&=&\left( \frac{y}{x},\frac{1-x-ky}{x} \right ).
\end{eqnarray*}
For a given $(x,y)\in\triangle$, $(x,y)\in\triangle_{k}$ if and only if $k=\lfloor \frac{1-x}{y} \rfloor$. Define $T:\triangle\to\triangle$ by $T(x,y)=T^{(k)}(x,y)$ for $(x,y)\in\triangle_{k}$. This map $T$, which is called the  \textbf{triangle map}, is analogous to the Gauss map.

\subsection{Incorporating Permutations}

The triangle map consists of a process of partitioning of a triangle with vertices $(\vv_1, \vv_2,\vv_3)$ into the triangles with vertices $(\vv_2, \vv_3,\vv_1 + \vv_3)$ and \mbox{$(\vv_1,\vv_2,\vv_1+\vv_3)$}. The essential thing to note is that this process assigns a particular ordering of vertices to both the vertices of the original triangle and the vertices of the two triangles produced.  But this ordering is by no means canonical.  We can permute the vertices at several stages of the triangle division.  This leads to the following definition.
\begin{definition}For every $(\sigma, \tau_0, \tau_1) \in S^3_3$, define
$$F_0 =F_0(\sigma, \tau_0, \tau_1)= \sigma A_0 \tau_0 \text{ and } F_1 = F_1(\sigma, \tau_0, \tau_1)=\sigma A_1 \tau_1$$
by thinking of $\sigma,$ $\tau_0$, and $\tau_1$ as column permutation matrices. Further, subdivide $\triangle$ by setting  $\triangle_k(\sigma, \tau_0, \tau_1) $ to be the image of $\triangle$ under $F_1^k F_0$
\end{definition}
Thus given any $(\sigma, \tau_0, \tau_1) \in S^3_3$, we can partition $\triangle$ in a distinct way using the matrices $F_0$ and $F_1$ instead of $A_0$ and $A_1$.  This leads to the definition of a family of multidimensional continued fractions algorithms, each specified by a $(\sigma, \tau_0, \tau_1) \in S^3_3$.  Because $\left|S_3^3\right| = 216$, this family has 216 elements.

Most important for our present purposes, we can define the map analogous to the Gauss map for any of the 216 multidimensional continued fractions algorithms.
\begin{definition}\label{Tmap}
Given any $(\sigma, \tau_0, \tau_1) \in S^3_3$, define
\[T^{(k)}_{\sigma, \tau_0, \tau_1}(1,x, y) = \pi\left[ (1, x, y) \cdot (B F_0^{-1} F_1^{-k} B^{-1})^T \right],\]
for any $(x,y)\in \triangle_k(\sigma, \tau_0, \tau_1),$
and $T_{\sigma, \tau_0, \tau_1}$ in an analogous manner to the way $T$ was defined in \ref{trianglemap}. Recall that $B = (\vv_1 \  \vv_2  \ \vv_3)$ and that $\pi(a,b,c) =(1,\frac{b}{a}, \frac{c}{a} )$.
\end{definition}
Note that the matrix $(B F_0^{-1} F_1^{-k} B^{-1})^T$ is in $SL(3,\ZZ)$.
The triangle partition maps $T_{\sigma, \tau_0, \tau_1}$  are called \textbf{Trip Maps}.

\subsection{Trip Sequences}

We now use this to define a method of constructing an integer sequence.  The sequence is analogous to the continued fraction expansion of a number.

\begin{definition}
Given $(x, y)\in \pi(\triangle)$, we recursively define $a_n$ to be the non-negative integer such that $(T_{\sigma, \tau_0, \tau_1})^n(x, y)$ is in $\pi(\triangle_{a_n}(\sigma, \tau_0, \tau_1))$. The \textbf{triangle sequence} of $(x, y)$ with respect to $(\sigma, \tau_0, \tau_1)$ is $(a_0,a_1,...)$.
\end{definition}

\subsection{An Even Larger Family: Combo Trip Maps}

Finally, and most importantly, we can obtain a much larger family of algorithms by mixing and matching these maps. For example, we can carry out the first subdivision of $\triangle$ using $(\sigma, \tau_0, \tau_1)$, the second subdivision using $(\sigma', \tau_0', \tau_1')$, and so forth. As we shown in \cite{SMALL-TRIP}, many well known multidimensional continued fractions are compositions of a finite number of the $216$ permutation-division maps.  To each of these Combo Trip  maps and to each $(x,y)$ is associated a sequence of non-negative integers in analog to the usual Trip sequence. Again, all of this is in \cite{SMALL-TRIP}.

\subsection{Periodicity}
We would like to understand properties of periodicity of Trip sequences. In order to investigate periodicity of these sequences, it turns out to be much easier to work with eigenvectors of the matrices representing the $T$ that produces the Trip sequence.  This yields the following important proposition, which will prove indispensable to us in this paper.

\begin{proposition} Suppose $T$ is a composition of a finite number of Trip maps of the form  $T_{e,e,e}$,  $T_{e,(23),e}$, $T_{e,(132),(132)}$, $T_{(23),(23),e}$, or $T_{(13),(12),e}$. If $(1,\alpha,\beta)\in \triangle$ is an eigenvector of the matrix representation of  $T$, then $(\alpha,\beta)$ has a purely periodic triangle sequence of period $1$ with respect to $T$.
\end{proposition}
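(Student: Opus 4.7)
The plan is to show that the eigenvector hypothesis forces $(\alpha,\beta)$ to be a fixed point of $T$, after which pure periodicity of period $1$ follows immediately from determinism of the Combo Trip algorithm. Write $T = T_N\circ\cdots\circ T_1$ with each $T_j$ drawn from the five listed Trip maps, and let $k_j$ be the integer label encountered at stage $j$ along the orbit of $(\alpha,\beta)$, so that the matrix representation of $T$ is the product $M = M_1 M_2 \cdots M_N$ in which $M_j = (B F_{0,j}^{-1} F_{1,j}^{-k_j} B^{-1})^T$ is the matrix assigned to $T_j$ in Definition \ref{Tmap}.

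My first step would be to track the action of $M$ on the row vector $(1,\alpha,\beta)$. Each factor $M_j$ encodes $T_j$ up to the projection $\pi$, so by induction the partial product $(1,\alpha,\beta)M_1\cdots M_j$ is a positive scalar multiple of $(1,\alpha_j,\beta_j)$ with $(\alpha_j,\beta_j) = T_j\circ\cdots\circ T_1(\alpha,\beta)$. The eigenvector hypothesis $(1,\alpha,\beta)M = \lambda(1,\alpha,\beta)$ then forces $T(\alpha,\beta) = \pi(\lambda(1,\alpha,\beta)) = (\alpha,\beta)$.

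The main technical obstacle is verifying that each intermediate point $(\alpha_j,\beta_j)$ actually lies in the subtriangle demanded by the next matrix factor $M_{j+1}$, so that the purported matrix $M$ genuinely is the matrix representation of $T$ at the input $(\alpha,\beta)$. This is where both the hypothesis $(1,\alpha,\beta)\in\triangle$ and the restriction to the five specific Trip maps are used. I would verify, using the explicit matrices $F_0, F_1$ for each of the triples $(e,e,e)$, $(e,(23),e)$, $(e,(132),(132))$, $((23),(23),e)$, and $((13),(12),e)$, that positivity of the partial products is preserved through each step of the composition and that the $k_{j+1}$-value dictated by the dynamics agrees with the one in the matrix factorization. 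This case-by-case check is the most delicate part of the argument.

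Once $(\alpha,\beta)$ is established as a fixed point of $T$, the period-$1$ conclusion is immediate: on the second application of $T$, the algorithm starts from $(\alpha,\beta)$ and revisits the identical sequence of intermediate points $(\alpha_1,\beta_1),\ldots,(\alpha_N,\beta_N)=(\alpha,\beta)$, hence selects the same block $(k_1,\ldots,k_N)$ of integer labels. Iterating shows this block repeats forever, so the trip sequence of $(\alpha,\beta)$ with respect to $T$ is purely periodic of period $1$.
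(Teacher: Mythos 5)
The paper itself offers no proof of this proposition: it defers entirely to sections six and seven of the cited TRIP-maps paper, so any comparison is against that external argument. Your overall strategy --- decompose $T$ into factors $M_j$, show by induction that the partial products $(1,\alpha,\beta)M_1\cdots M_j$ track the orbit, deduce from the eigenvector hypothesis that $(\alpha,\beta)$ is a fixed point of $T$, and conclude that the block of integer labels repeats forever --- is the right mechanism and, as far as the present paper indicates, the same one used in the cited proof. The fixed-point deduction and the determinism argument for period $1$ are correct and routine.

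However, as written the proposal has a genuine gap, one you have correctly located but not closed. The entire content of the proposition lives in the step you defer to a ``case-by-case check'': verifying that each intermediate point $(\alpha_j,\beta_j)$ actually lies in the subtriangle prescribed by the next factor, equivalently that the partial products remain (up to positive scaling) in the cone over $\triangle$ and that the subdivision index selected by the dynamics coincides with the exponent $k_{j+1}$ built into $M_{j+1}$. Note also a latent circularity you must untangle there: your induction already presumes the algorithm selects the matrices $M_j$, which is exactly what membership in the right subtriangles is supposed to guarantee, so the eigenvector hypothesis must be fed into the membership verification itself rather than invoked only at the end. Without that verification your argument would prove the proposition for an arbitrary Combo Trip map, and the paper explicitly warns that the statement fails in that generality (``Many other Combo Trip maps have this property. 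Unfortunately \ldots not all do.''). So the analysis of the five specific triples $(e,e,e)$, $(e,(23),e)$, $(e,(132),(132))$, $((23),(23),e)$, $((13),(12),e)$ is not a routine check to be waved at; it is the theorem, and a complete proof must exhibit for each of them the sign and inequality conditions on $(1,x,y)\in\triangle$ that pin down the subdivision index and show these are reproduced along the eigenvector's orbit.
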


The proof is in sections six and seven of \cite{SMALL-TRIP}.   Many other Combo Trip maps have this property.  Unfortunately, as discussed in 
\cite{SMALL-TRIP}, not all do.


\section{A Family of Algorithms Yielding Periodicity}\label{mainsection}
We are now ready to present the main technical results of this paper. We begin by introducing the countable family of multidimensional continued fraction algorithms that we will use.

\subsection{A Countable Family of Algorithms}
The family of algorithms we consider is constructed from five Trip Maps, namely from maps 
$$\begin{array}{cccccc} T_{e,e,e} &  T_{e,(23),e} & T_{e,(132),(132)} & T_{(23),(23),e} & T_{(13),(12),e} \end{array}$$


From these we create the following three classes of maps, where $n \in \mathbb{Z}_{\ge0}$ :
\begin{enumerate}

\item Perform $T_{e,(132),(132)}$ $n$ times, then $T_{e,e,e}$ once
\item Perform $T_{e,(132),(132)}$ $n$ times, then $T_{e,(23),e}$ once
\item  Perform $T_{(13),(12),e}$ $n$ times, then $T_{(23),(23),e}$ once

\end{enumerate}

From now on, we refer to these as Class 1, Class 2, and Class 3. For each class, we get a different algorithm for each choice of non-negative integer $n$.  The domains can be recursively defined as follows. For Class 1, for a fixed $n$,  for each choice of $(n+1)$ tuple  $(m_1, \ldots m_n, k)$ of non-negative integers, we set  $\triangle_{m_{1}} $ as before to be the image of  $\triangle$ under $F_1^{m_1}(e,(132),(132)) F_0(e,(132),(132))$.  Then inductively define $\triangle_{(m_1, \ldots , m_l)},$  for $2\leq l\leq n,$ to be all $(x,y)\in \triangle$ such that 
$$T_{e,(132),(132)}^{(m_l)}(x,y) \in  \triangle_{(m_1, \ldots , m_{l-1})}$$ and finally set 
define $\triangle_{(m_1, \ldots , m_n, k)}$ to be all $(x,y)\in \triangle$ such that 
$$T_{e,e,e}^{(k)}(x,y) \in  \triangle_{(m_1, \ldots , m_{n})}.$$
The domains for Class 2 and Class 3 are defined in a similar fashion.

  To ease notation, we denote an $n$ tuple $(m_1, m_2, \ldots , m_n)$ by vector notation $\overline{m}$. Then to each point $(x,y)$, given a class and a nonnegative integer $n$,  we can associate an infinite sequence of $(n+1)$ tuples of nonnegative integers:$((\overline{m}_0,k_0), (\overline{m}_1,k_1), \ldots )$ defined as follows. First, we require
  $$(x,y)\in \triangle_{(\overline{m}_0,k_0)}.$$
  Now apply the given Combo Trip map (for example,  if we are doing Class 1,  perform $T_{e,(132),(132)}$ $n$ times, then $T_{e,e,e}$ once).  The initial point $(x,y)$ must be mapped to a point in $ \triangle_{(\overline{m}_1,k_1)}.$  Then applying the given Combo Trip map again, we must end up in  $\triangle_{(\overline{m}_2,k_2)}.$  Now continue.  As mentioned earlier, periodicity will imply that $x$ and $y$ are at worst cubic irrationals in the same cubic number field, as shown in \cite{SMALL-TRIP}.  (Of course, via simple concatenation, we could write the sequence of $(n+1)$ tuples as a sequence of nonnegative integers, if we so desire.)

Further, all of these maps can be explicitly defined on the appropriate domains as follows:
\[
T_{e,(132),(132)}^{(1)}(x,y)=\left(\frac{x}{1-x},\frac{y}{1-x}\right)
\]
\[
T_{e,e,e}^{(k)}(x,y)=\left(\frac{y}{x},\frac{1-x-ky}{x}\right)
\]

\[
T_{e,(23),e}^{(k)}(x,y)=\left(\frac{y}{x},\frac{(k+1)y+x-1}{x}\right)
\]
\[
T_{(13),(12),e}^{(k)}(x,y)=\left(\frac{x}{1-(k+1)y},\frac{y}{1-(k+1)y}\right)
\]
and $T_{(23),(23),e}^{(k)}(x,y)$ has first coordinate 
$$\frac{\left(1+\frac{1}{2}\left(-1+(-1)^{k}\right)\right)x-(-1)^{k}y}{x},$$
 and second coordinate
$$\frac{-1+\left(2+\frac{1}{2}\left(-1+(-1)^{k}\right)+\frac{1}{4}\left(1-(-1)^{k}+2k\right)\right)x+\left(-(-1)^{k}+\frac{1}{2}\left(-1+(-1)^{k}\right)\right)y}{x}.$$
Thus we can in principle compute the maps for each of the three classes of COMBO TRIP maps.

\subsection{Periodicity for Class 1 Maps}

\begin{theorem}\label{first}Let $A,B\in\mathbb{Z}$ with $A\geq0$ and $B\geq1.$ If $\alpha^{3}+A\alpha^{2}+B\alpha-1=0,$ then
$(\alpha,\alpha^{2})$ has a periodic triangle sequence under a map in Class 1.
\end{theorem}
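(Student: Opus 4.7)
The plan is to exhibit $(\alpha,\alpha^{2})$ as a fixed point of an appropriately chosen Class 1 Combo Trip map, and then appeal to the Proposition of the previous subsection. Once $(\alpha,\alpha^{2})$ is fixed by our Combo Trip map $T$, the projective vector $(1,\alpha,\alpha^{2})$ is automatically a left eigenvector of the $SL(3,\ZZ)$ matrix representing $T$, so the Proposition produces a purely periodic triangle sequence of period $1$ for $(\alpha,\alpha^{2})$, which is stronger than the periodicity the theorem requires. Thus the whole proof reduces to (a) an algebraic fixed-point identity, and (b) a verification that the relevant sub-region branches of the various Trip maps are the ones in which our point actually sits.

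For (a), I use the explicit formulas given in the previous subsection. A short induction on $n$ shows that $n$-fold iteration of $T_{e,(132),(132)}$ sends $(\alpha,\alpha^{2})$ to $\bigl(\alpha/(1-n\alpha),\,\alpha^{2}/(1-n\alpha)\bigr)$, provided $n\alpha<1$. Applying $T_{e,e,e}^{(k)}$ to this point produces
\[
\left(\alpha,\;\tfrac{1}{\alpha}-(n+1)-k\alpha\right),
\]
and demanding that the second coordinate equal $\alpha^{2}$ is, after multiplying through by $\alpha$, equivalent to
\[
\alpha^{3}+k\alpha^{2}+(n+1)\alpha-1=0.
\]
Matching this with the hypothesis $\alpha^{3}+A\alpha^{2}+B\alpha-1=0$ gives the unique solution $k=A$ and $n=B-1$, and the hypotheses $A\geq 0$, $B\geq 1$ make these legitimate non-negative integer indices. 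The candidate Combo Trip map is therefore the Class 1 map that applies $T_{e,(132),(132)}$ exactly $B-1$ times and then $T_{e,e,e}^{(A)}$ once.

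Step (b) is where I expect the main technical obstacle. From $\alpha^{3}+A\alpha^{2}+B\alpha-1=0$ together with $A,B\geq 0$, one gets $B\alpha=1-\alpha^{3}-A\alpha^{2}<1$, so $\alpha\in(0,1/B)$. This immediately forces $0<\alpha^{2}<\alpha<1$, and also $(j+1)\alpha<1$ for every $0\leq j\leq B-1$, so each intermediate iterate $\bigl(\alpha/(1-j\alpha),\alpha^{2}/(1-j\alpha)\bigr)$ lies in $\triangle$ and falls into the branch of $T_{e,(132),(132)}$ corresponding to the explicit formula we used. For the final branch selection, setting $x_{n}=\alpha/(1-(B-1)\alpha)$ and $y_{n}=\alpha^{2}/(1-(B-1)\alpha)$, one computes using the cubic
\[
\frac{1-x_{n}}{y_{n}}=\frac{1-B\alpha}{\alpha^{2}}=\frac{\alpha^{3}+A\alpha^{2}}{\alpha^{2}}=A+\alpha\in(A,A+1),
\]
so $\lfloor(1-x_{n})/y_{n}\rfloor=A$, which is the branch of $T_{e,e,e}$ we intended. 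Combining the fixed-point identity with these domain checks completes the proof.
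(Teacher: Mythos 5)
Your proof is correct and takes essentially the same route as the paper: you identify the same Class 1 map, namely $B-1$ applications of $T_{e,(132),(132)}$ followed by $T_{e,e,e}^{(A)}$, and verify that $(\alpha,\alpha^2)$ is fixed, which is just the coordinate form of the paper's computation that $(1,\alpha,\alpha^2)$ is an eigenvector of the matrix product $(T_{e,(132),(132)}^{(1)})^{B-1}\cdot T_{e,e,e}^{(A)}$. Your step (b), checking that $\alpha\in(0,1/B)$ and that $\lfloor(1-x_n)/y_n\rfloor=A$, is a welcome addition the paper omits entirely, though to be fully airtight you would still need to pin down explicitly which partition piece $\triangle_k(e,(132),(132))$ each intermediate iterate lies in, a point the paper also glosses over.
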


\begin{proof}
The matrix representation of $T_{e,(132),(132)}^{(1)}$ is
\[
T_{e,(132),(132)}^{(1)}=\left(\begin{array}{ccc}
1 & 0 & 0\\
-1 & 1 & 0\\
0 & 0 & 1
\end{array}\right),
\]
Raising this to some integer power $B$ yields
 \[
(T_{e,(132),(132)})^{B}=\left(\begin{array}{ccc}
1 & 0 & 0\\
-B & 1 & 0\\
0 & 0 & 1
\end{array}\right).
\]
We know that 
\[
T_{e,e,e}^{(A)}=\left(\begin{array}{ccc}
0 & 0 & 1\\
1 & 0 & -1\\
0 & 1 & -A
\end{array}\right).
\]

The product $(T_{e,(132),(132)}^{(1)})^{B-1}\cdot T_{e,e,e}^{(A)}$ is 
\[
\left(\begin{array}{ccc}
0 & 0 & 1\\
1 & 0 & -B\\
0 & 1 & -A
\end{array}\right),
\]

which has eigenvector $(1,\alpha,\alpha^{2})$ where $\alpha^{3}+A\alpha^{2}+B\alpha-1=0,$
as desired.

\end{proof}

\subsection{Periodicity for Class 2 Maps}

\begin{theorem}\label{second}
Let $A,B\in\mathbb{Z}_{>0}.$ Then, if $\alpha^{3}-A\alpha^{2}-B\alpha+1=0,$
$(\alpha,\alpha^{2})$ has a periodic triangle sequence under a map in Class 2.\end{theorem}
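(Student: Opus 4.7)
The plan is to mirror the proof of Theorem \ref{first} exactly, replacing the last factor $T_{e,e,e}^{(A)}$ with an appropriate power of $T_{e,(23),e}$ and adjusting the bookkeeping of branch indices. Specifically, I claim that the Class 2 Combo Trip map that applies $T_{e,(132),(132)}^{(1)}$ exactly $B-1$ times and then $T_{e,(23),e}^{(A-1)}$ once does the job. Both exponents are non-negative since $A,B\in\mathbb{Z}_{>0}$, so this is a legitimate Class 2 map with $n=B-1$.

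First I would read off the matrix representation of $T_{e,(23),e}^{(k)}$ from the explicit formula
\[
T_{e,(23),e}^{(k)}(x,y)=\left(\frac{y}{x},\frac{(k+1)y+x-1}{x}\right),
\]
together with the row-vector convention of Definition \ref{Tmap}. A direct reading gives
\[
T_{e,(23),e}^{(k)} = \begin{pmatrix} 0 & 0 & -1 \\ 1 & 0 & 1 \\ 0 & 1 & k+1 \end{pmatrix}.
\]
Second, from the proof of Theorem \ref{first}, we already know $(T_{e,(132),(132)}^{(1)})^{B-1}$ is lower triangular with $(2,1)$-entry equal to $-(B-1)$.

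Third, a short matrix multiplication yields
\[
(T_{e,(132),(132)}^{(1)})^{B-1}\cdot T_{e,(23),e}^{(A-1)} = \begin{pmatrix} 0 & 0 & -1 \\ 1 & 0 & B \\ 0 & 1 & A \end{pmatrix}.
\]
Right-acting on the row vector $(1,\alpha,\alpha^2)$ produces $(\alpha,\alpha^2,\;-1+B\alpha+A\alpha^2)$, and the hypothesis $\alpha^3-A\alpha^2-B\alpha+1=0$ collapses the third coordinate to $\alpha^3$. Hence $(1,\alpha,\alpha^2)$ is an eigenvector of the Combo Trip map, with eigenvalue $\alpha$. Since both $T_{e,(132),(132)}$ and $T_{e,(23),e}$ appear on the list in Proposition 1, that proposition applies and $(\alpha,\alpha^2)$ has a purely periodic triangle sequence of period one under this Class 2 map.

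There is no real obstacle here beyond routine bookkeeping: one must be careful that the hypothesis $A,B\ge 1$ is exactly what is needed to make the branch indices $n=B-1$ and $k=A-1$ non-negative, and one must match the matrix-multiplication order against the right-action convention in Definition \ref{Tmap}. The only mildly non-routine point worth double-checking is that the computed eigenvector actually lies in $\triangle$ (equivalently that the cubic has a root in $(0,1]$), but this follows from $f(0)=1>0$ and $f(1)=2-A-B\le 0$ whenever $A+B\ge 2$, which is automatic except in the degenerate case $A=B=1$ where $\alpha=1$ is not an irrational anyway.
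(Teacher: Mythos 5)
Your proposal is correct and follows essentially the same route as the paper's proof: the same Class 2 map with $n=B-1$ applications of $T_{e,(132),(132)}$ followed by $T_{e,(23),e}^{(A-1)}$, the same product matrix $\left(\begin{smallmatrix}0&0&-1\\1&0&B\\0&1&A\end{smallmatrix}\right)$, and the same eigenvector $(1,\alpha,\alpha^2)$. The extra details you supply (the explicit eigenvector computation and the check that the cubic has a root in $(0,1]$) are sound and merely fill in steps the paper leaves implicit.
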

\begin{proof}
We know that the matrix representation of $T_{e,(23),e}^{(A-1)}$ is 
\[
T_{e,(23),e}^{(A-1)}=\left(\begin{array}{ccc}
0 & 0 & -1\\
1 & 0 & 1\\
0 & 1 & A
\end{array}\right).
\]

The product $(T_{e,(132),(132)}^{(1)})^{B-1}\cdot T_{e,(23),e}^{(A-1)}$ is
\[
\left(\begin{array}{ccc}
0 & 0 & -1\\
1 & 0 & B\\
0 & 1 & A
\end{array}\right),
\]

which has eigenvector $(1,\alpha,\alpha^{2})$ where $\alpha^{3}-A\alpha^{2}-B\alpha+1=0,$
as desired. 

\end{proof}

\subsection{Periodicity for Class 3 Maps}

\begin{theorem}\label{third}
Let $A,B\in\mathbb{Z}_{>0},$ with $B>A$.  Then, if $\alpha^{3}-A\alpha+B\alpha-1=0,$
$(\alpha,\alpha-\alpha^{2})$ has a periodic triangle sequence under
a map in Class 3.\end{theorem}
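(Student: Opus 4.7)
The plan is to follow the template of the proofs of Theorems \ref{first} and \ref{second}: exhibit a single integer matrix $M$ representing one full application of a specific Combo Trip Map in Class 3, and verify directly that $(1,\alpha,\alpha-\alpha^2)$ is a left eigenvector of $M$ whenever $\alpha$ satisfies the displayed cubic. The proposition at the end of Section \ref{pdsection} then yields a purely periodic triangle sequence of period one.

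First I would read off matrix representations of the two Trip Maps in Class 3, using the row-vector convention $(1,x,y)\mapsto(1,x,y)M$ adopted in the earlier proofs. The map $T_{(13),(12),e}^{(k)}$ is projectively linear, and its formula yields
\[
M^{(1)}_k\;=\;\begin{pmatrix}1&0&0\\0&1&0\\-(k+1)&0&1\end{pmatrix}.
\]
These matrices commute, and an $n$-fold product with parameters $k_1,\dots,k_n$ collapses to a matrix of the same shape whose bottom-left entry is $-N$, where $N=\sum_i(k_i+1)$ may be any integer $\geq n$. For $T_{(23),(23),e}^{(k)}$ the explicit formula is piecewise in the parity of $k$; the even-$k$ branch, on setting $j=2+k/2\in\{2,3,4,\dots\}$, yields
\[
M^{(2)}_k\;=\;\begin{pmatrix}0&0&-1\\1&1&j\\0&-1&-1\end{pmatrix}.
\]

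I would then form the product $M=\bigl(\prod_i M^{(1)}_{k_i}\bigr)\,M^{(2)}_k$ and act on $(1,\alpha,\alpha-\alpha^2)$ on the left. A short calculation produces $(\alpha,\,\alpha^2,\,-1+(N+j-1)\alpha+(1-N)\alpha^2)$, and demanding that this equal $\alpha\cdot(1,\alpha,\alpha-\alpha^2)$ reduces, via $\alpha(\alpha-\alpha^2)=\alpha^2-\alpha^3$, to the single polynomial identity $\alpha^3-N\alpha^2+(N+j-1)\alpha-1=0$. Writing $A=N$ and $B=N+j-1$ recovers the target cubic $\alpha^3-A\alpha^2+B\alpha-1=0$ (the form suggested by the pattern of Theorems \ref{first} and \ref{second} together with the hypothesis $B>A$), and the requirement $j\geq2$ is exactly the condition $B>A$. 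Any permissible pair $(A,B)$ can then be realized by choosing $n\in\{1,\ldots,A\}$, a tuple $(k_1,\dots,k_n)$ with $\sum(k_i+1)=A$, and $k=2(B-A-1)$.

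The main obstacle I anticipate is the bookkeeping around $T_{(23),(23),e}^{(k)}$: its piecewise dependence on the parity of $k$ makes the matrix extraction the most error-prone step, and one still has to check that the point $(\alpha,\alpha-\alpha^2)$ actually lies in the correct subdivision cell so that the index $k=2(B-A-1)$ is legitimately attained. The underlying domain membership $(\alpha,\alpha-\alpha^2)\in\triangle$ reduces to $0<\alpha<1$, which follows from the intermediate value theorem applied to $\alpha^3-A\alpha^2+B\alpha-1$ at the endpoints $0$ and $1$ under the hypothesis $B>A$.
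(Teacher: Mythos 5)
Your proposal is correct and follows essentially the same route as the paper: the paper forms the product $(T_{(13),(12),e}^{(0)})^{A}\cdot T_{(23),(23),e}^{(2(1+B-A)-4)}$, obtaining exactly your matrix with $N=A$ and $j=1+B-A$, and verifies that $(1,\alpha,\alpha-\alpha^{2})$ is an eigenvector precisely when $\alpha^{3}-A\alpha^{2}+B\alpha-1=0$. Your extra remarks on domain membership and on which subdivision index is actually attained address points the paper's proof leaves implicit, but they do not change the argument.
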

\begin{proof}
The matrix representation of $T_{(13),(12),e}^{(0)}$ is 
\[
T_{(13),(12),e}^{(0)}=\left(\begin{array}{ccc}
1 & 0 & 0\\
0 & 1 & 0\\
-1 & 0 & 1
\end{array}\right),
\]

and
\[
(T_{(13),(12),e}^{(0)})^{A}=\left(\begin{array}{ccc}
1 & 0 & 0\\
0 & 1 & 0\\
-A & 0 & 1
\end{array}\right).
\]

We know that 
\[
T_{(23),(23),e}^{(2X-4)}=\left(\begin{array}{ccc}
0 & 0 & -1\\
1 & 1 & X\\
0 & -1 & -1
\end{array}\right).
\]

The product $(T_{(13),(12),e}^{(0)})^{A}\cdot T_{(23),(23),e}^{(2(1+B-A)-4)}$ is 
\[
\left(\begin{array}{ccc}
0 & 0 & -1\\
1 & 1 & 1+B-A\\
0 & -1 & -1+A
\end{array}\right),
\]

which has eigenvector $(1,\alpha,\alpha-\alpha^{2})$ where $\alpha^{3}-A\alpha^{2}+B\alpha-1=0,$
as desired.
\end{proof}

\subsection{Main Results on Units}

We want to see why the above cubic polynomials are actually quite natural.  Our first goal is 

\begin{theorem} 
Let $K$ be a cubic number field. If $u\in\mathcal{O}_{K}$ is a unit such that $0<u<1$,
then either $(u,u^{2}),$ $(u^{2},u^{4}),$ $(u,u^{2}-u),$ $(u^{2},u^{2}-u^{4}),$ $(uu',(uu')^{2}-uu'),$ or $((uu')^{2},(uu')^{2}-(uu')^{4}),$ where $u'$ is a conjugate of $u$, has a periodic triangle sequence under a map in Class 1, 2, or 3.
\end{theorem}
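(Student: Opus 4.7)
The plan is to reduce the statement to one of Theorems \ref{first}--\ref{third} by analyzing the minimal polynomial of $u$ and, when necessary, passing to auxiliary algebraic integers built from $u$ (namely $u^{2}$ or $(uu')^{2}$ where $u'$ is a Galois conjugate). Since $[K:\QQ]=3$ is prime and $u\in(0,1)$ is a unit with $u\ne\pm 1$, the minimal polynomial of $u$ over $\QQ$ is a cubic $p(x)=x^{3}+ax^{2}+bx+c$ with $a,b\in\ZZ$ and $c=\pm 1$. Irreducibility of $p$ combined with the root in $(0,1)$ forces $p(0)\,p(1)<0$, which translates to the constraints $a+b>0$ when $c=-1$ and $a+b<-2$ when $c=+1$.

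First I would try applying the three theorems directly to $(u,u^{2})$ or $(u,u-u^{2})$: Theorem \ref{first} covers the subcase $c=-1,\,a\ge 0,\,b\ge 1$; Theorem \ref{third} covers $c=-1,\,a<0$ (using the sign constraint $a+b>0$ to obtain $b>-a$); and Theorem \ref{second} covers $c=+1,\,a<0,\,b<0$. The residual configurations are therefore (R1) $c=-1,\,a\ge 1,\,b\le 0$; (R2) $c=+1,\,a<0,\,b\ge 0$; and (R3) $c=+1,\,a\ge 0,\,b\le -2$. For these I would compute the minimal polynomials of $u^{2}$ and $(uu')^{2}$ via symmetric-function identities: $u^{2}$ has minimal polynomial $y^{3}+(2b-a^{2})y^{2}+(b^{2}-2ac)y-1$, while $uu'$ is a root of $y^{3}-by^{2}+acy-1$ (with constant term $-1$ since $uu'\cdot uu''\cdot u'u''=(uu'u'')^{2}=c^{2}=1$), whence the minimal polynomial of $(uu')^{2}$ follows from another squaring-of-roots computation. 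In (R3), Theorem \ref{third} applied to $u^{2}$ works: its hypothesis $B>A>0$ reduces to $(b+1)^{2}>(a+1)^{2}$, a consequence of $a+b<-2$. In (R2), the same theorem applied to $(uu')^{2}$ works: the inequality reduces to $(a+1)^{2}>(b+1)^{2}$, again from $a+b<-2$.

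The main obstacle is residual case (R1). Applying Theorem \ref{third} to $(uu')^{2}$, whose minimal polynomial is $z^{3}-(2a+b^{2})z^{2}+(a^{2}-2b)z-1$, gives the hypothesis $(a-1)^{2}>(b+1)^{2}$; this follows immediately from $a+b>0$ when $b\le -2$, holds for $a\ge 2$ when $b=-1$, and requires $a\ge 3$ when $b=0$. The two remaining edge cases $(a,b,c)=(1,0,-1)$ and $(a,b,c)=(2,0,-1)$ must be handled separately: the first by applying Theorem \ref{third} to $u^{2}$ (whose minimal polynomial $y^{3}-y^{2}+2y-1$ satisfies $B=2>A=1$), and the second by observing that $x^{3}+2x^{2}-1=(x+1)(x^{2}+x-1)$ is reducible over $\QQ$, so this configuration does not actually occur. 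Together these verifications exhaust every admissible $(a,b,c)$ and produce in each case one of the six pairs listed in the statement.
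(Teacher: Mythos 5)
There is a genuine gap, and it sits at the very first step of your sign analysis. The claim that irreducibility of $p$ plus a root in $(0,1)$ forces $p(0)p(1)<0$ is false: an irreducible cubic can have \emph{two} roots in $(0,1)$, in which case $p(0)$ and $p(1)$ have the same sign. A concrete counterexample is $p(x)=x^{3}-6x^{2}+5x-1$ (irreducible, roots approximately $0.31$, $0.61$, $5.08$), for which $p(0)=p(1)=-1$. In your notation this is $c=-1$, $a=-6$, $b=5$, so $a+b=-1\le 0$; it is not covered by your direct application of Theorem \ref{third} (which needs $b>-a$, i.e.\ $B>A$), and it does not fall into any of your residual configurations (R1) requires $a\ge 1$, while (R2) and (R3) require $c=+1$. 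So the entire family $x^{3}-Ax^{2}+Bx-1$ with $A>B>0$ and two roots in $(0,1)$ is silently dropped. This is precisely the hardest case in the paper's proof (Case 6 of Lemma \ref{equations} with $A>B$): there one shows that a second real root $u_{2}\in(0,1)$ exists, that $uu_{2}=1/u_{3}\in(0,1)$ has minimal polynomial $x^{3}-Bx^{2}+Ax-1$ with the roles of $A$ and $B$ swapped (restoring the $B>A$ hypothesis of Theorem \ref{third}), and that one may have to leave $K$ for the quadratic extension $K(\sqrt{\Delta})$. Your erroneous inequality eliminates exactly the situation that forces the conjugate products $uu'$ and the field extension into the statement of the theorem.

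A secondary issue: in your residual cases (R1) and (R2) you apply Theorem \ref{third} to $(uu')^{2}$, but you never verify that $uu'$ is real, let alone that $(uu')^{2}$ lies in $(0,1)$ so that the pair $\bigl((uu')^{2},(uu')^{2}-(uu')^{4}\bigr)$ lies in the domain $\triangle$. If $u'$ and $u''$ are complex conjugates, then $uu'$ is complex and the triangle sequence is undefined; even when all roots are real, the constraint $u'u''=1/u>1$ means the relevant product can fail to land in $(0,1)$. The paper avoids this by only invoking the product of two roots that it has explicitly located in $(0,1)$. Your symmetric-function computations of the minimal polynomials of $u^{2}$ and $(uu')^{2}$ are correct and match the paper's, but both the case decomposition and the domain verification need to be repaired before the reduction to Theorems \ref{first}--\ref{third} goes through.
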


Now, by Dirichlet's Unit Theorem, every cubic number field contains an infinite number of units in the interval $(0,1)$. This means there are an infinite number of these ordered pairs.  This yields the following important corollary. 

\begin{corollary}
Let $K$ be a cubic number field, $u$ be a real unit  in $\mathcal{O}_K,$ with $0<u<1$ and $E=K(\sqrt{\Delta_{\mathbb{Q}(u)}})$ (where  $\Delta_{\mathbb{Q}(u)}$ is the discriminant of $\mathbb{Q}(u)$).  Then there exists a point $(\alpha,\beta)$, with $\alpha,\beta\in E$ irrational, such that $(\alpha,\beta)$ has a periodic TRIP sequence. 
\end{corollary}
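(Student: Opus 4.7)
The plan is to apply the preceding theorem directly to the given unit $u$, which produces one of the six candidate pairs with a periodic triangle sequence. The corollary will follow once two things are verified for the selected pair: that both coordinates lie in $E$, and that both are irrational.

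For the containment in $E$, the first four candidates are polynomial expressions in $u$ alone and hence lie in $\mathbb{Q}(u)\subseteq K \subseteq E$, leaving only the two candidates involving a Galois conjugate $u'$. For those, the coordinates live in the normal closure $L$ of $K/\mathbb{Q}$, and I would invoke the standard fact that $L=K$ when $K/\mathbb{Q}$ is cyclic and $L=K(\sqrt{\Delta_{\mathbb{Q}(u)}})=E$ otherwise. The identification $\mathbb{Q}(\sqrt{\Delta_{\text{poly}}})=\mathbb{Q}(\sqrt{\Delta_{\mathbb{Q}(u)}})$ — the polynomial discriminant of the minimal polynomial of $u$ and the field discriminant of $\mathbb{Q}(u)$ differ by a rational square — is what matches the normal closure with the field $E$ of the statement.

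For irrationality, the key observation is that since $u$ is a unit with $0<u<1$, $u\neq\pm 1$ and so $u\notin\mathbb{Q}$; combined with $u\in K$ and $[K:\mathbb{Q}]=3$, this forces $[\mathbb{Q}(u):\mathbb{Q}]=3$, so $u$ is a cubic irrational with irreducible cubic minimal polynomial. For each of the four coordinates involving only $u$ (namely $u^2$, $u^4$, $u^2-u$, $u^2-u^4$), a routine reduction modulo the minimal polynomial shows that rationality would give a polynomial relation for $u$ of degree at most two, contradicting irreducibility. For the coordinates involving the conjugate $u'$, I would use the unit-norm identity $uu'u''=N_{K/\mathbb{Q}}(u)=\pm 1$: if $uu'\in\mathbb{Q}$, then $u''=\pm 1/(uu')\in\mathbb{Q}$ would be a rational root of the irreducible cubic minimal polynomial of $u$ — impossible. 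A short Galois orbit count in either of the possible Galois-group cases $\mathbb{Z}/3$ or $S_3$ shows the orbit $\{uu',u'u'',u''u\}$ has size three, so $uu'$ is itself cubic irrational, and the same reduction argument then handles $(uu')^2$, $(uu')^2-uu'$, and $(uu')^2-(uu')^4$.

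Combining these verifications with the preceding theorem yields the required pair $(\alpha,\beta)\in E\times E$ of irrational numbers with periodic TRIP sequence. The only step requiring any real care will be the unit-norm/Galois-orbit argument for pairs involving $u'$; the rest is routine degree-counting combined with the standard structure theorem for normal closures of cubic fields.
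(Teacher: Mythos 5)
Your proposal is correct and follows the same route as the paper, which deduces the corollary immediately from the preceding theorem --- indeed the paper writes no separate proof at all, only the remark that Dirichlet's unit theorem supplies infinitely many units $u$ with $0<u<1$ to feed into the theorem. The verifications you add (that every coordinate lies in $E$ because the splitting field of the minimal polynomial of $u$ is $K(\sqrt{\Delta_{\mathbb{Q}(u)}})$, and that each coordinate is irrational by degree and norm arguments) are exactly the details the paper leaves implicit, and they check out; the only point worth tightening is that your ``reduction mod the minimal polynomial'' step should note that the resulting degree-$\le 2$ relation cannot be the trivial one, e.g.\ because $w^2-w^4\in\mathbb{Q}$ would force $[\mathbb{Q}(w^2):\mathbb{Q}]\le 2$, which is incompatible with $\mathbb{Q}(w^2)\subseteq\mathbb{Q}(w)$ of prime degree $3$ unless $w$ has degree $\le 2$.
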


This theorem and corollary form the essence of this paper.  The proof of the theorem reduces to calculations, which, though not difficult, requires a number of rather technical lemmas, which are stated below. 

\begin{lemma}\label{root}Let $K$ be a cubic number field. Let $u$ be a unit in $\mathcal{O}_{K}.$  Then there exists $A,B\in \mathbb{Z}$ such that either $u^{3}+Au^{2}+Bu+1=0$ or $u^{3}+Au^{2}+Bu-1=0$.
\end{lemma}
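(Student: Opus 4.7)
The plan is to split into cases based on the degree of $\mathbb{Q}(u)$ over $\mathbb{Q}$. The decisive observation is that $[K:\mathbb{Q}]=3$ is prime, so the only intermediate fields are $\mathbb{Q}$ and $K$ itself; hence either $\mathbb{Q}(u)=K$ or $u \in \mathbb{Q}$.

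First I would dispose of the degenerate case $u \in \mathbb{Q}$. Because $u \in \mathcal{O}_K$ is an algebraic integer, $u \in \mathcal{O}_K \cap \mathbb{Q} = \mathbb{Z}$, and because $u$ is a unit in $\mathcal{O}_K$ we must have $u = \pm 1$. For $u=1$, take $A=B=0$ in $u^3+Au^2+Bu-1=0$; for $u=-1$, take $A=B=0$ in $u^3+Au^2+Bu+1=0$. So the claim is trivial in this case.

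The main case is $\mathbb{Q}(u)=K$. Then the minimal polynomial $f(x)$ of $u$ over $\mathbb{Q}$ has degree $3$, and since $u$ is an algebraic integer, $f(x) \in \mathbb{Z}[x]$ and is monic. Write
\[
f(x) = x^3 + Ax^2 + Bx + C, \qquad A,B,C \in \mathbb{Z}.
\]
Because $\mathbb{Q}(u)=K$, this minimal polynomial coincides with the characteristic polynomial of the multiplication-by-$u$ map on $K$, so its constant term is $C = -N_{K/\mathbb{Q}}(u)$. Since $u$ is a unit in $\mathcal{O}_K$, the norm $N_{K/\mathbb{Q}}(u) = \pm 1$, and hence $C = \mp 1$. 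In other words, $f(x)$ is exactly one of
\[
x^3 + Ax^2 + Bx + 1 \quad \text{or} \quad x^3 + Ax^2 + Bx - 1,
\]
and $u$ is a root of it. This proves the lemma.

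The only subtle point is the constant term: one has to identify the product of the roots of $f$ (the conjugates of $u$) with the field norm, which is standard since for a primitive element the minimal and characteristic polynomials agree. I expect no real obstacle; the proof is essentially a one-paragraph consequence of ``a unit has norm $\pm 1$'' together with the fact that $3$ is prime.
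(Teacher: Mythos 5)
Your proof is correct, and it takes a somewhat different route from the paper's. The paper's argument is shorter: it writes down a monic integer cubic $u^{3}+Au^{2}+Bu+C=0$ satisfied by $u$, divides by $u^{3}$ to see that $u^{-1}$ is a root of $Cx^{3}+Bx^{2}+Ax+1$, and concludes $C=\pm1$ from the fact that $u^{-1}$ is also an algebraic integer. You instead split on whether $\Q(u)=\Q$ or $\Q(u)=K$ (using that $3$ is prime), dispatch $u=\pm1$ by hand, and in the main case identify the constant term of the minimal polynomial with $-N_{K/\Q}(u)=\mp1$. The two arguments are cousins --- both ultimately say that the constant term of the minimal polynomial of a unit must be a unit of $\ZZ$ --- but yours is the more careful one: the paper's version tacitly assumes the chosen cubic is the minimal polynomial of $u$ (otherwise the inference ``$u^{-1}$ is an algebraic integer, hence $C=\pm1$'' does not follow; a rational unit such as $u=1$ satisfies monic cubics like $x^{3}-x^{2}$ with constant term $0$), whereas your case split handles the degenerate rational case explicitly and your norm computation avoids the reversal trick altogether. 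What the paper's approach buys is brevity and independence from the norm machinery; what yours buys is rigor at the degree-one edge case and a cleaner identification of exactly why the constant term is $\pm1$.
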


\begin{proof}Since we know $u$ is an algebraic integer, there must exists integers $A,B,C$ such that $u^{3}+Au^{2}+Bu+C=0$.  This means $Cu^{-3}+Bu^{-2}+Au^{-1}+1=0$.  Since $u^{-1}$ must also be an algebraic integer, we know $C=\pm1.$ This, implies $u^{3}+Au^{2}+Bu+1=0$ or $u^{3}+Au^{2}+Bu-1=0$. 

\end{proof}

We now use Lemma \ref{root} in order to prove Lemma \ref{equations}.

\begin{lemma}\label{equations} Let $\alpha\in\mathbb{R}$ be a cubic irrational. Let $u$ be a unit in $\mathcal{O}_{\mathbb{Q}(\alpha)}$ and $\Delta$ be the discriminant of the cubic number field $\mathbb{Q}(\alpha).$  There exist $P,Q \in \mathbb{Z}_{>0}$ such that some element, strictly between zero and one and in $\mathbb{Q}(\alpha,\sqrt{\Delta})$,  satisfies one of the following equations:
\begin{enumerate}
\item \label{firstequation}$x^{3}+Px^{2}+Qx-1=0$
\item \label{secondequation}$x^{3}-Px^{2}-Qx+1=0$
\item \label{thirdequation} $x^{3}+Qx-1=0$
\item \label{fourthequation}$x^{3}-Px^{2}+Qx-1=0$ where $Q>P.$
\end{enumerate}
\end{lemma}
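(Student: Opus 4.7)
My plan is to apply Lemma \ref{root} to a suitably normalized unit, and then exhibit, among the four candidate elements $u$, $u^2$, $uu'$, and $(uu')^2$ (where $u'$ is a conjugate of $u$ chosen as needed), one that lies in $(0,1)\cap\mathbb{Q}(\alpha,\sqrt{\Delta})$ and realizes one of the four target sign patterns.

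I would first normalize: since $\pm u^{\pm 1}$ are all units in $\mathcal{O}_{\mathbb{Q}(\alpha)}$, I may assume (after fixing a real embedding) that $u\in(0,1)$; Dirichlet's unit theorem supplies units of infinite order in any cubic field, avoiding the edge case $|u|=1$. Lemma \ref{root} then gives integers $A,B$ and $\epsilon\in\{\pm 1\}$ with $u^3+Au^2+Bu+\epsilon=0$. Applying the Vieta identities $u+u'+u''=-A$, $uu'+uu''+u'u''=B$, $uu'u''=-\epsilon$ together with standard symmetric-function manipulations, I compute the characteristic polynomials of the four candidates over $\mathbb{Q}$:
\[
u\colon\ x^3+Ax^2+Bx+\epsilon,\qquad u^2\colon\ x^3-(A^2-2B)x^2+(B^2-2A\epsilon)x-1,
\]
\[
uu'\colon\ x^3-Bx^2+A\epsilon\,x-1,\qquad (uu')^2\colon\ x^3-(B^2-2A\epsilon)x^2+(A^2-2B)x-1.
\]
For $uu'$ (and $(uu')^2$) I choose the conjugate $u'$ so that the product lies in $(0,1)$; such a choice exists because the three products $uu',uu'',u'u''$ are precisely the roots of the displayed polynomial, and at least one of them lies in $(0,1)$ whenever the polynomial has any of the target sign patterns (by the intermediate value theorem at $x=0,1$).

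The core of the proof is a case analysis on the signs of $A,B,\epsilon$. The direct matches via $u$ itself are: $\epsilon=-1,\ A\geq 1,\ B\geq 1$ yields equation (1) with $(P,Q)=(A,B)$; $\epsilon=-1,\ A=0,\ B\geq 1$ yields (3); $\epsilon=-1,\ A\leq -1,\ B\geq 1,\ A+B>0$ yields (4) with $(P,Q)=(-A,B)$; and $\epsilon=+1,\ A\leq -1,\ B\leq -1$ yields (2). The remaining mixed-sign configurations (including $\epsilon=+1$ with a nonnegative coefficient, and $\epsilon=-1$ with $B<0$) are handled by passing to $uu'$, $u^2$, or $(uu')^2$: the polynomial for $uu'$ essentially swaps $A$ and $B$ with an $\epsilon$-twist, while those for $u^2$ and $(uu')^2$ are swapped versions of each other with constant term $-1$, so that whichever of $A^2-2B$ or $B^2-2A\epsilon$ is larger can play the role of $Q$ in (4).

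The main obstacle I anticipate is verifying the strict inequality $Q>P$ in case (4) in every residual subcase, since this comparison depends sensitively on the signs and magnitudes of $A$ and $B$; I will need to combine integrality of $A,B$ with the constraints forced by $u\in(0,1)$ (e.g.\ the sign of the polynomial at $x=0,1$) to rule out boundary equalities. A secondary concern is the non-generic case in which a candidate has degree $<3$ over $\mathbb{Q}$ (lying in a proper subfield); here the candidate is either rational or quadratic and satisfies an even simpler relation, from which the appropriate sign pattern can still be read off directly.
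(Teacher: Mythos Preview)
Your proposal is correct and follows essentially the same strategy as the paper: normalize to $u\in(0,1)$, apply Lemma~\ref{root}, perform a case analysis on the sign pattern of $(A,B,\epsilon)$, and in the cases that do not match (1)--(4) directly pass to one of $u^2$, $uu'$, or $(uu')^2$. The only real difference is organizational: you precompute the minimal polynomials of all four candidates at once via Vieta's formulas, whereas the paper derives the polynomial of $u^2$ by an ad~hoc manipulation (writing $u(u^2\pm Au\pm B)=\mp 1$, squaring, and simplifying) separately in each case and only invokes the conjugate product in the single recalcitrant Case~6. Your symmetric-function bookkeeping is cleaner and makes the ``swap'' phenomenon (that passing from $u^2$ to $(uu')^2$ interchanges $A^2-2B$ and $B^2-2A\epsilon$) transparent, which is exactly what resolves the $Q>P$ obstacle you flag; the paper handles the same obstacle by noting that $P=Q$ forces the cubic to factor as $(x-1)(x^2-(P-1)x+1)$, contradicting irreducibility. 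Your secondary concern about a candidate having degree less than~$3$ is in fact vacuous here: since $[\mathbb{Q}(u):\mathbb{Q}]=3$ and $uu'=\pm u''^{-1}$, all four candidates generate a cubic field.
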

\begin{proof}
Let $0<u<1$ be an irrational unit in $\mathbb{Q}(\alpha).$ From Lemma \ref{root} we know that there must exist $A,B \in \mathbb{Z}_{\ge0}$ such that one of $f_{\delta_1,\delta_2,\delta_3}(u)=u^{3}\pm Au^{2}\pm Bu \pm1=0$ must hold,  where $\delta_1,\delta_2,$ and $\delta_3$ specify the signs in front of $A,B,$ and $1$. Let $f_{\epsilon_1,\epsilon_2,\epsilon_3}$ be a specific one of these polynomials such that $f_{\epsilon_1,\epsilon_2,\epsilon_3}(u)=0$.  
This yields $18$ cases. 

We first consider the cases when $A,B\ne 0$.
\begin{case} $\epsilon_1=+,\epsilon_2=+,\epsilon_3=+$
\end{case}
In this case, $f_{\epsilon_1,\epsilon_2,\epsilon_3}(u)=u^{3}+ Au^{2}+ Bu +1$. Since this polynomial cannot have any roots between $0$ and $1$, this yields a contradiction, and so this case cannot occur.

\begin{case}$\epsilon_1=+,\epsilon_2=+,\epsilon_3=-$
\end{case}
This means $f_{\epsilon_1,\epsilon_2,\epsilon_3}$ is in the form of (\ref{firstequation}) and so we are done.

\begin{case} $\epsilon_1=+,\epsilon_2=-,\epsilon_3=+$
\end{case}
This means $f_{\epsilon_1,\epsilon_2,\epsilon_3}(u)=u^{3}+Au^{2}- Bu +1=0$, which implies $u(u^{2}+Au-B)=-1$.  This means
\begin{eqnarray*}
u^{2}&=&\frac{1}{u^{4}+A^{2}u^{2}+B^{2}+2Au^{3}-2Bu^{2}-2ABu}\\
  &=&\frac{1}{u^{4}+(A^{2}-2B)u^{2}+B^{2}+2A(u^{3}-Bu)}\\
 &=&\frac{1}{u^{4}+(A^{2}-2B)u^{2}+B^{2}+2A(-1-Au^{2})}\\
 &=&\frac{1}{u^{4}+(-A^{2}-2B)u^{2}+B^{2}-2A}
\end{eqnarray*}
Setting $v=u^2$, we have 
\[
v=\frac{1}{v^{2}+(-A^{2}-2B)v+B^{2}-2A}
\]
This implies $v^{3}+(-A^{2}-2B)v^{2}+(B^{2}-2A)v-1=0.$ Since we know $0<u<1$, this means $0<v<1$ too.  So for  $v^{3}+(-A^{2}-2B)v^{2}+(B^{2}-2A)v-1=0$ to have a solution between $0$ and $1$ we need $(B^{2}-2A)>0$.  Letting $P=A^2+2B$ and $Q=B^{2}-2A$, we are left with $v^3-Pv^2+Qv-1=0$ where $P,Q\in \mathbb{Z}^+$. This reduces to Case 6.

\begin{case}$\epsilon_1=+,\epsilon_2=-,\epsilon_3=-$\end{case}
This means $u^{3}+Au^{2}-Bu-1=0$, so $u(u^{2}+Au-B)=1$. As before, we have
\begin{eqnarray*}
u^{2}&=&\frac{1}{u^{4}+A^{2}u^{2}+B^{2}+2Au^{3}-2Bu^{2}-2ABu}\\
  &=&\frac{1}{u^{4}+(A^{2}-2B)u^{2}+B^{2}+2A(u^{3}-Bu)}\\
 &=&\frac{1}{u^{4}+(A^{2}-2B)u^{2}+B^{2}+2A(1-Au^{2})}\\
 &=&\frac{1}{u^{4}+(-A^{2}-2B)u^{2}+B^{2}+2A}
\end{eqnarray*}
which then implies, setting $v=u^2$, that   $v^{3}+(-A^{2}-2B)v^{2}+(B^{2}+2A)v-1=0.$ This reduces to Case 6.

\begin{case} $\epsilon_1=-,\epsilon_2=+,\epsilon_3=+$
\end{case} 
This means $u^{3}-Au^{2}+Bu+1=0$ and so $ u(u^{2}-Au+B)=-1$.  As before, we have
\begin{eqnarray*}
u^{2}&=&\frac{1}{u^{4}+A^{2}u^{2}+B^{2}-2Au^{3}+2Bu^{2}-2ABu}\\
  &=&\frac{1}{u^{4}+(A^{2}+2B)u^{2}+B^{2}-2A(u^{3}+Bu)}\\
 &=&\frac{1}{u^{4}+(A^{2}+2B)u^{2}+B^{2}-2A(-1+Au^{2})}\\
 &=&\frac{1}{u^{4}+(2B-A^{2})u^{2}+B^{2}+2A}
\end{eqnarray*}
which then implies, setting $v=u^2$, that $v^{3}+(2B-A^{2})v^{2}+(B^{2}+2A)v-1=0.$   If $2B-A^{2}\geq0,$ then $g_{\gamma_1,\gamma_2,\gamma_3}$ is of desired forms (\ref{firstequation}).   If $2B-A^{2}<0$, then, as in the previous case, $g_{\gamma_1,\gamma_2,\gamma_3}$ must be of desired form (\ref{fourthequation}).

\begin{case}$\epsilon_1=-,\epsilon_2=+,\epsilon_3=-$
\end{case}
It is here where we will see our need to consider units not in just the original cubic number field but possibly in a quadratic extension.  We have that $u$ is a real unit between zero and one that is a root of the irreducible polynomial
$$f(x) = x^3 -Ax^2+Bx-1.$$
We consider the three subcases of $A=B$, $A<B$ and $A>B$.  If $A=B$, then $f(x)$ is not irreducible, since
we have 
$$x^{3}-Ax^{2}+Ax-1=(x-1)(x^2-(A-1)x+1).$$
Hence this cannot happen.

If $B>A$,  $f_{\epsilon_1,\epsilon_2,\epsilon_3}$ is of desired form (\ref{fourthequation}) and we are done.

This leaves the case for when $A>B$.   Label the three roots of $f(x)$ by our original $u$, and $u_2$ and $u_3$.  We first will show that one of these two other roots is also real, between zero and one.  Note that 
$f(0)=-1<0$ and
$$f(1) = 1 - A + B -1 <0.$$
(This is the step where we are using that $A>B$.)  
Since $f(u)=0$ and since $u$ is not a double root (using here that $f(x)$ is irreducible), we can indeed assume that $u_2$ is real between zero and one.   We know that 
$$uu_2u_3=1,$$
meaning that $u_3$ is real and greater than one. 
We know that $1/u_3 = uu_2$ is real between zero and one  and will have minimal polynomial 
$$x^{3}-Bx^{2}+Ax-1,$$
reducing our problem to the subcase where $B>A.$  Also, note that even with $u$ being a unit in a cubic number field $K$, we can only claim that $u_3$ and hence $1/u_3$ are units in the quadratic extension $K(\sqrt{\Delta_{\mathbb{Q}(u)}})$

\begin{case}$\epsilon_1=-,\epsilon_2=-,\epsilon_3=+$
\end{case}
If $u^{3}-Au^{2}-Bu+1=0,$ then $f_{\epsilon_1,\epsilon_2,\epsilon_3}$ is of desired form (\ref{secondequation}). 

\begin{case} $\epsilon_1=-,\epsilon_2=-,\epsilon_3=-$
\end{case}
The equation $u^{3}-Au^{2}-Bu-1=0$ will not have any solutions between $0$ and $1$ so this case cannot occur.

We now consider the cases when $B=0$. 
\begin{case}$\epsilon_1=+,B=0,\epsilon_3=+$ 
\end{case}
The equation $u^{3}+Au^{2}+1=0$ will not have any solutions between $0$ and $1$ so this case cannot occur.

\begin{case}$\epsilon_1=+,B=0,\epsilon_3=-$ 
\end{case}
This means $u^{3}+Au^{2}-1=0$, which implies $u(u^{2}+Au)=1$. This means
\begin{eqnarray*}
u^{2}&=&\frac{1}{u^{4}+A^{2}u^{2}+2Au^{3}}\\
  &=&\frac{1}{u^{4}+A^{2}u^{2}+2A(1-Au^{2})}\\
 &=&\frac{1}{u^{4}-A^{2}u^{2}+2A}.
 \end{eqnarray*}
 As before, set $v=u^2$, which means that 
 $v^{3}-A^{2}v^{2}+2Av-1=0.$ We are now in case 6.

\begin{case}$\epsilon_1=-,B=0,\epsilon_3=+$ 
\end{case}
If $u^{3}-Au^{2}+1=0$, then $u(u^{2}-Au)=-1$ and so 
\begin{eqnarray*}
u^{2}&=&\frac{1}{u^{4}+A^{2}u^{2}-2Au^{3}}\\
  &=&\frac{1}{u^{4}+A^{2}u^{2}-2A(-1+Au^{2})}\\
 &=&\frac{1}{u^{4}-A^{2}u^{2}+2A}\\
 \end{eqnarray*}
As in the previous cases, letting $v=u^2$, yields the equation $v^{3}-A^{2}v^{2}+2Av-1=0,$ which reduces to Case 6.

\begin{case}$\epsilon_1=-,B=0,\epsilon_3=-$ 
\end{case}
The equation $u^{3}-Au^{2}-1=0$ has no solutions between $0$ and $1$, so this case cannot occur.

Next, we consider the cases when $A=0$.

\begin{case}$A=0,\epsilon_2=+,\epsilon_3=+$\end{case} 
The equation  $u^{3}+Bu+1=0$ has no solutions between $0$ and $1$, so this case cannot occur.

\begin{case}$A=0,\epsilon_2=+,\epsilon_3=-$\end{case} 
If $u^{3}+Bu-1=0,$ then $f_{\epsilon_1,\epsilon_2,\epsilon_3}$ is of desired form (\ref{thirdequation}).

\begin{case}$A=0,\epsilon_2=-,\epsilon_3=+$\end{case} 
Suppose that $u^{3}-Bu+1=0.$ Then $u(u^{2}-B)=-1$ and so 
$$u^{2}=\frac{1}{u^{4}-2Bu^{2}+B^{2}}$$
Then, letting $v=u^2$, we have  $v^{3}-2Bv^{2}+B^{2}v-1=0$, which is case 6.
\begin{case}$A=0,\epsilon_2=-,\epsilon_3=-$\end{case}
The equation  $u^{3}-Bu-1=0$ has no solutions between $0$ and $1$, so this case cannot occur.

\begin{case} $A=0,B=0, \epsilon_3=+$\end{case}
The equation  $u^{3}-1=0$ has no solutions between $0$ and $1$, so this case cannot occur.

\begin{case} $A=0,B=0, \epsilon_3=-$\end{case}
The equation  $u^{3}+1=0$ has no solutions between $0$ and $1$, so this case cannot occur.

\end{proof}

Thus, we have now showed that for every cubic number field $K$, some element $u$ in a quadratic extension of $K$ will be the solution to one of $4$ classes of equations.  We next want to examine the periodicity properties of solutions of these four equations.  Our goal is to be able to use $u$ to produce points that have periodic triangle sequences under a map in one of our three classes.  
We are now ready for the proof of the main theorem of this section, which will now be quite simple. 

\begin{proof}
Let $u\in\mathcal{O}_{K}$ be a unit such that $0<u<1$.  Then by Lemma \ref{equations} either $v=u,$ $v=u^2,$ or $v=uu'$ for some conjugate $u'$ of $u$ is a solution to equations (\ref{firstequation}), (\ref{secondequation}), (\ref{thirdequation}), or (\ref{fourthequation}).  If $v$ is a solution to (\ref{firstequation}), (\ref{secondequation}), (\ref{thirdequation}), then by Theorem \ref{first} or Theorem \ref{second}, the triangle sequence of the point $(v,v^2)$ is periodic with respect to a map in Class 1 or Class 2. If $v$ is a solution to (\ref{fourthequation}), then by Theorem \ref{third}, the triangle sequence of $(v,v-v^2)$ is periodic with respect to a map in Class 3. 

\end{proof}

Of course, the earlier work of Dubois and Paysant-Le Roux \cite{Dubois-Paysant-Le Roux75} shows that every real cubic number field has a pair of elements that is purely periodic with respect to Jacobi-Perron (which is the same thing as periodic with respect to the G\"{u}ting map, which is in turn a type of Combo Trip map.).  The above result shows that the same happens for our three classes, and as discussed earlier, the involved polynomials and associated pairs of real numbers are simpler to write down and include far more polynomials and pairs of real numbers.

\section{ Hermite Matrices: Periodicity of a Row and the  Hermite Problem}\label{hermitesection}
We will now see how the above results present an approach to the Hermite problem.  An ideal solution to the Hermite problem would be that we could find a single algorithm $T$ such that given an arbitrary $\alpha$ we could produce a $\beta$ such that the sequence of integers associated with $(\alpha,\beta)$ with respect to $T$ is periodic if and only if $\alpha$ is a cubic irrational.  However, as discussed in the introduction,  it seems as though no such single algorithm can exist.

As mentioned in the introduction, we think that  the best possible solution to the Hermite problem is in the form of a family of algorithms,  meaning that for any arbitrary pair  $(\alpha, \beta)$ of real numbers, we can produce a sequence of integers associated to   $(\alpha,\beta)$ that will be periodic with respect to some algorithm in the family if and only if $(\alpha, \beta) $ are in the same cubic number field. Such a family is capable of being encoded as a matrix $(a_{ij})$, with $1\leq i,j <\infty$.  Each row will be the sequence of integers associated to some algorithm in the family.   In this language, we would want one of the rows to be eventually periodic if and only if $\alpha$ is a cubic irrational.  Such is matrix will now be constructed. 

Suppose $\mathcal{F}$ is a countable family of multidimensional continued fraction algorithms.  List them as $T(1),T(2), \ldots.$ Given a pair of reals $(\alpha,\beta)$, for each algorithm $T(n)$ there will be a corresponding sequence of integers, which we will denote by $(a_{n1}, a_{n2},a_{n3}, \dots).$

\begin{definition} The Hermite matrix $\mathcal{H}(\alpha, \beta )$ for a countable family of multidimensional continued fraction algorithms is the matrix with non-negative integer entries whose $n$th row is the $(a_{n1}, a_{n2},a_{n3}, \dots)$ with respect to the $T(n)$-algorithm.
\end{definition}
Thus 
$$\mathcal{H}(\alpha, \beta) =\left( \begin{array}{c}
\mbox{the $T(1)$ sequence for $(a(1),b(1))$}   \\
\mbox{the $T(2)$ sequence for $(a(2),b(2))$} \\
\mbox{the $T(3)$ sequence for $(a(3),b(3))$} \\
\vdots \end{array}  \right)$$

Theorems \ref{first},\ref{second} and \ref{third} can be interpreted as giving criterion for when the corresponding Hermite matrices have a periodic row.

This also lead to 
\begin{question} Let $\mathcal{T}$ be the family of multidimensional continued fractions formed from COMBO Trip maps.  Is it the case that a pair of reals $(\alpha, \beta)\in \triangle$ are in the same cubic number field if and only if there is a row in the corresponding Hermite matrix that is eventually periodic?\end{question}

The rhetoric of Hermite matrices allows us to reformulate conjecture 36 in \cite{Karpenkov13}:
\begin{conjecture}  There is a family of multidimensional continued fractions spanned by a finite number of multidimensional continued fraction algorithms such that a pair of reals $(\alpha, \beta)\in \triangle$ are in the same cubic number field if and only if there is a row in the corresponding Hermite matrix is eventually periodic.\end{conjecture}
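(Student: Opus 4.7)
The plan is to attack the two directions of the conjecture separately. The ``easy'' direction, that eventual periodicity of some row of $\mathcal{H}(\alpha,\beta)$ forces $\alpha,\beta$ into a common cubic number field, should follow uniformly from the theory already used in \cite{SMALL-TRIP}: an eventually periodic sequence for a composition of Combo Trip maps yields, after passing to the purely periodic tail, an integer matrix $A \in SL(3,\mathbb{Z})$ (not a root of unity) fixing the projective class of some vector $(1,\alpha',\beta')$ obtained from $(1,\alpha,\beta)$ by a rational change of coordinates coming from the transient prefix. The characteristic polynomial of $A$ is then a degree-$3$ integer polynomial one of whose roots determines $\alpha'$ and $\beta'$, so $\alpha$ and $\beta$ lie in the at-most-cubic number field generated by that root.

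For the hard direction I would bootstrap off the main theorem of Section \ref{mainsection}. That theorem produces, for each cubic number field $K$, a distinguished pair $(v,w) \in \triangle$ lying in $K$ (or in $K(\sqrt{\Delta})$) with a purely periodic expansion under one of Class 1, 2, or 3. Given an arbitrary $(\alpha,\beta) \in \triangle$ with $\alpha,\beta$ in the same cubic field, the strategy would be to exhibit an algorithm in the Combo Trip family whose iterates carry $(\alpha,\beta)$, after a finite transient, onto the orbit of $(v,w)$. Since $(1,\alpha,\beta)$ and $(1,v,w)$ span the same $\mathbb{Q}$-plane inside $K^3$ together with $(1,0,0)$, there is $M \in GL(3,\mathbb{Q})$ carrying one projective class to the other; clearing denominators gives $M' \in GL(3,\mathbb{Z})$ doing the job projectively. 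The heart of the proof would then be realizing $M'$, up to left multiplication by powers of the matrix whose eigenvector is $(1,v,w)$, as a finite composition of Trip-map steps whose partition data are consistent with the successive images of $(\alpha,\beta)$.

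The main obstacle is precisely this realizability question. Even granting that the full family of $216$ Trip maps generates a large subgroup of $GL(3,\mathbb{Z})$ under unrestricted composition, one needs each factor in a decomposition of $M'$ to correspond to a legitimate algorithm step, i.e.\ its input must actually lie in the partition element $\triangle_k(\sigma,\tau_0,\tau_1)$ specifying the factor. This is a nontrivial combinatorial-geometric constraint. A natural attack is to track the orbit of the cone generated by $(1,\alpha,\beta)$ inside $\mathbb{R}^3$ under the family, using the arithmetic of $\mathcal{O}_K$-modules to bound the number of possible reduced states and then concluding by a pigeonhole/recurrence argument that the sequence must revisit a state. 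A secondary wrinkle is the discriminant extension $K(\sqrt{\Delta})$ that already intrudes in Lemma \ref{equations}; the conjecture as stated may require a finer construction that avoids this extension or else a mild weakening of the statement.

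If carried out, the proof would have the shape: (1) use units in $\mathcal{O}_K$ and their conjugates, via the main theorem of Section \ref{mainsection}, to construct a target periodic orbit for $K$; (2) show by a recurrence argument on $GL(3,\mathbb{Z})$-reduced data that every $(\alpha,\beta)$ with coordinates in $K$ eventually feeds into this orbit under at least one algorithm in the family; (3) conclude eventual periodicity of the corresponding row of $\mathcal{H}(\alpha,\beta)$. The hard part will be step (2); it is plausible that obtaining it requires enlarging the finite generating set beyond the five base maps of this paper, possibly using all $216$ Trip maps, with delicate dynamical control over how the partition labels evolve along the orbit.
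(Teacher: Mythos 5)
The statement you are addressing is not a theorem of the paper: it is stated as a \emph{conjecture} (a reformulation of Conjecture 36 in \cite{Karpenkov13}), and the paper offers no proof of it. Indeed the authors explicitly say that the Hermite problem remains open and that what they can actually establish is much weaker, namely that for each cubic number field there exists \emph{at least one} pair (built from a unit $u$, or from $uu'$ in the quadratic extension $K(\sqrt{\Delta})$) whose expansion is periodic under one of the Class 1, 2, or 3 maps. So there is no paper proof to compare yours against, and your proposal should be judged on whether it closes the conjecture on its own. It does not.

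Your ``easy'' direction is essentially fine and is already in \cite{SMALL-TRIP}: eventual periodicity of a row yields an integer matrix fixing the projective class of $(1,\alpha,\beta)$ up to a rational change of coordinates, whence $\alpha,\beta$ are at worst cubic irrationals in a common field. The genuine gap is your step (2), which is the entire content of the conjecture. You need to show that for \emph{every} $(\alpha,\beta)\in\triangle$ with coordinates in a cubic field $K$, \emph{some} algorithm in a family spanned by finitely many maps produces an eventually periodic expansion; your proposal reduces this to (a) decomposing a matrix $M'\in GL(3,\mathbb{Z})$ into Trip-map factors each of which is a \emph{legitimate} partition step for the actual orbit of $(\alpha,\beta)$, and (b) a pigeonhole argument on ``$GL(3,\mathbb{Z})$-reduced states.'' Neither is substantiated: you give no definition of the reduced states, no proof that there are finitely many of them for a fixed $K$, and no mechanism for overcoming the combinatorial-geometric constraint that each factor's input must lie in the prescribed subsimplex $\triangle_k(\sigma,\tau_0,\tau_1)$ --- an obstruction you yourself name as ``the main obstacle.'' This is precisely where every attempted arithmetic solution of the Hermite problem (Jacobi--Perron included) has failed for over a century, and the paper's whole thesis is that the known techniques only reach distinguished unit pairs, not arbitrary pairs. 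A further mismatch you correctly flag but do not resolve: the paper's main theorem may only land in $K(\sqrt{\Delta_{\mathbb{Q}(u)}})$ rather than $K$, so even your ``target periodic orbit'' in step (1) need not lie in the field the conjecture is about. As written, your proposal is a reasonable research program, not a proof.
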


Of course, there are corresponding Hermite matrices for higher dimensional analogs.

\section{Questions}
A lot of questions remain.  First, what other families of multi-dimensional continued fractions exist whose associated matrix   has a periodic row if and only if $\alpha$ is cubic?  Is there there a way to stay in the initial cubic number field $\Q(\alpha)$ and not go the larger number field $\Q(\alpha, \sqrt{\triangle})$?  Can we require all the elements in a family of multi-dimensional continued fractions to fall into the scope of such maps as in \cite{Lagarias93} .

Once having chosen our family of Trip maps, there are then many questions about the corresponding Hermite  matrix $\mathcal{H}(\alpha, \beta)$ which are in direct analog to what is known about continued fraction expansions. These questions strike us as currently accessible.  

The matrix $\mathcal{H}(\alpha, \beta)$  should contain  all information about the numbers $\alpha$ and $\beta$.  What other algebraic properties of $\alpha$ and $\beta$  can be obtained from the Hermite matrix?

Of course, the original Hermite problem is still open.  In light of this paper, we think the correct line of attack would be in showing that no single multidimensional continued fraction algorithm will have periodicity being equivalent to a number being a cubic.  This strikes us as quite hard.

\end{document}